\newtheorem{theorem}{Theorem}[section]
\newtheorem{corollary}{Corollary}
\newtheorem{proposition}{Proposition}
\theoremstyle{definition}
\newtheorem{definition}[theorem]{Definition}
\newtheorem{remark}{Remark}
\title[On motions without falling of an inverted pendulum] %Use the shortened version of the full title
      {On motions without falling of an inverted pendulum with dry friction}
\author[Ivan Polekhin]{}
\subjclass{Primary: 70K40, 37B55; Secondary: 34A60.}
 \keywords{Dry friction, inverted pendulum, continuous dependence on initial conditions.}
 \email{ivanpolekhin@mi.ras.ru}
\thanks{This work was supported by the Program of the Presidium of the Russian Academy of Sciences No 01 ‘Fundamental Mathematics and its
Applications’ under grant PRAS-18-01.}
\thanks{$^*$ Corresponding author: Ivan Polekhin}
\begin{document}
\maketitle

% Enter the first author's name and address:
\centerline{\scshape Ivan Polekhin$^*$}
\medskip
{\footnotesize
% please put the address of the first author
 \centerline{Steklov Mathematical Institute of the Russian Academy of Sciences}
   \centerline{ Moscow, Russia}
} % Do not forget to end the {\footnotesize by the sign }

% \medskip

% \centerline{\scshape First-name2 last-name2 and First-name3
% last-name3}
% \medskip
% {\footnotesize
%  % please put the address of the second  and third author
%  \centerline{ First line of the address of the second author}
%   \centerline{Other lines}
%   \centerline{Springfield, MO 65810, USA}
% }

\bigskip

% The name of the associate editor will be entered by an editorial staff
% "Communicated by the associate editor name" is not needed for special issue.
 \centerline{(Communicated by ...)}

%The abstract of your paper
\begin{abstract}
An inverted planar pendulum with horizontally moving pivot point is considered. It is assumed that the law of motion of the pivot point is given and the pendulum is moving in the presence of dry friction. Sufficient conditions for the existence of solutions along which the pendulum never falls below the horizontal positions are presented. The proof is based on the fact that solutions of the corresponding differential inclusion are right-unique and continuously depend on initial conditions, which is also shown in the paper. 
\end{abstract}

%The title of your section 1
\section{Mechanical model}
Consider an inverted planar pendulum in a gravitational field with its pivot point moving along a horizontal line in the plane of the pendulum. The law of motion $\xi(t)$ of the pivot point is given and the pendulum is moving in the presence of dry friction. Let $l$ be the length of the pendulum, $m$ be its mass. Let $r = (r_x, r_y)$ be the radius vector of the massive point of the pendulum, and $r_x$, $r_y$ are its components in axes of an orthogonal coordinate system $Oxy$, where $Oy$ is the vertical axis. The general equation of motion has the form
$$
m \ddot r = F_{grav} + F_{fric} + N.
$$
Here $F_{grav} = -mg\cdot e_y$ is the applied force of gravity, $F_{fric}$ is the force of dry friction, and $N$ is the force of constraint that appears from the holonomic constraint $(r_x - \xi(t))^2 + r_y^2 = l^2$. By $g$ we denote the gravitational acceleration. We assume that the force of dry friction is the Coulomb friction. In our model we consider the case when the Stribeck effect can be ignored, and the difference between the dynamic and static friction coefficients is negligibly small. Therefore, $F_{fric}$ is as follows \cite{popov2010, ivanov2009bifurcations}
$$
F_{fric} = -\mu |N| \frac{v}{|v|}, \mbox{ if } v \ne 0, \quad |F_{fric}| \leqslant \mu |N|,
\mbox{ if } v = 0.
$$
Here $\mu > 0$ is the dry friction coefficient, $N$ is the normal reaction force, and $v$ is the relative velocity of the massive point: $v = (\dot r_x - \dot \xi)e_x + \dot r_y e_y$. 

 Let $q$ be the angle between the horizontal line and the rod ($q = 0$ or $q = \pi$ are the horizontal positions). It is not hard to obtain that $|N| = m |\ddot \xi \cos q -l\dot q^2 + g\sin q|$. Therefore, for $v \ne 0$, the equation of motion can be presented as follows
 \begin{equation}
 \label{eq1}
\begin{aligned}
    &\dot q = p,\\
    &\dot p = \frac{\ddot\xi}{l}\sin q -\frac{\mu}{l}\left| \ddot\xi \cos q - lp^2 + g\sin q \right|\frac{p}{|p|} - \frac{g}{l}\cos q.
\end{aligned}
\end{equation}
When $p = 0$, $|F_{fric}|$ can be any value between zero and $\mu |N|$. Therefore, the motion of the system cannot be described by an ordinary differential equation. One of the possible and convenient solution to this problem is to consider a differential inclusion corresponding to the considered model of dry friction, which we do in the next section.

The system of a one-dimensional inverted pendulum --- being a simple yet strongly non-linear system --- has been considered by many authors (see, for instance, \cite{kapitsa1954pendulum, bardin1995stability, butikov2001dynamic, seyranian2006stability}). Many of these results deal with the smooth system of a pendulum with vertically oscillating pivot. Unlike these cases, we consider a non-smooth system with dry friction and the pivot point moving horizontally and show that there always exists a solution along which the inverted pendulum never falls below the horizontal line.

\section{Main result}
The system (\ref{eq1}) and similar systems can be presented in the following form
\begin{equation}
\label{eq2}
\dot x = f(x, t),
\end{equation}
where $f$ is a piecewise continuous function in a domain $G \subset \mathbb{R}^{n+1}$ and $M \subset G$ is a set of measure zero of points of discontinuity of $f$. Following \cite{filippov2013differential}, consider a differential inclusion associated with the above equation (\ref{eq2})
\begin{equation}
\label{eq3}
\dot x \in F(x,t),
\end{equation}
where $F \colon G \to 2^{\mathbb{R}^{n}}$ is a set-valued function defined as follows: for any point $(x,t) \in G$, the set $F(x,t)$ is the smallest convex closed set containing all the limit values of $f(x^*,t)$, $(x^*,t) \notin M$, $x^* \to x$.
\begin{definition}
A solution of the differential inclusion (\ref{eq3}) is an absolutely continuous function $x \colon I \to \mathbb{R}^n$ defined on an interval or on a segment $I$ for which (\ref{eq3}) is satisfied almost everywhere. 
\end{definition}
Below, by the solution of (\ref{eq1}) we mean the solution of the corresponding differential inclusion with the right-hand side denoted by $\Phi = \Phi(q,p,t)$ (in our case, $M$ is the plane $p = 0$). We also assume that $\ddot \xi $ in (\ref{eq1}) is a Lipschitz function. 

First, we show that the existence of solutions and their continuous dependence on initial data follow directly from the general properties of $\Phi$.
\begin{definition}
Let $A, B \subset \mathbb{R}^n $ be non-empty closed sets. Then
$$
\beta(A,B) = \sup\limits_{a \in A}\rho(a, B).
$$
Here $\rho(a, B) = \inf\limits_{b \in B} \rho(a,b)$ and $\rho(a,b)$ is the Euclidean distance in $\mathbb{R}^n$.
\end{definition}
\begin{definition}
A set-valued function $F$ is called upper semicontinuous at $x \in \mathbb{R}^n$, if $\beta(F(y),F(x)) \to 0$ as $y \to x$. A function is called upper semicontinuous on a set $G$ if it is upper semicontinuous at each point of $G$. 
\end{definition}

It is not hard to see that $\Phi$ is upper semicontinuous in $\mathbb{R} / 2\pi\mathbb{Z} \times \mathbb{R} \times \mathbb{R}$.
\begin{theorem}\cite{filippov2013differential}
Let $F$ be an upper semicontinuous set-valued function in a domain $G \subset \mathbb{R}^{n+1}$, and for all $(x, t) \in G$, $F(x, t)$ is a non-empty, bounded, closed and convex set. Then for any point $(x_0, t_0) \in G$ there exists a local solution of the problem
$$
\dot x \in F(x,t), \quad x(t_0) = x_0.
$$
Moreover, if $G$ is closed and bounded, then every solution can be continued up to the boundary of $G$.
\end{theorem}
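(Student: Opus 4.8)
The plan is to prove local existence by a compactness argument applied to Euler-type polygonal approximations, and then to deduce the continuation statement by a maximal-solution argument. Because the right-hand side is set-valued, the decisive structural hypotheses are the convexity and closedness of the values $F(x,t)$ together with upper semicontinuity; these are precisely what will allow the passage to a limit.

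First I would localize. Fix $(x_0,t_0)\in G$ and choose a closed cylinder $Q=\{(x,t): |x-x_0|\le b,\ |t-t_0|\le a\}\subset G$. Since $Q$ is compact and $F$ is upper semicontinuous with compact values, the union $\bigcup_{(x,t)\in Q}F(x,t)$ is bounded, say $|y|\le M$ for every $y\in F(x,t)$ with $(x,t)\in Q$. Set $\alpha=\min\{a,\,b/M\}$. I would then build approximate solutions on $[t_0,t_0+\alpha]$ (and symmetrically to the left): partition the interval into $N$ subintervals, and on each piece propagate $x$ linearly using a velocity $v_k\in F(x_k,t_k)$ selected at the left endpoint. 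Each polygon is $M$-Lipschitz and stays in $Q$ by the choice of $\alpha$, so the family $\{x_N\}$ is uniformly bounded and equicontinuous. By Arzel\`a--Ascoli a subsequence converges uniformly to an $M$-Lipschitz, hence absolutely continuous, limit $x(\cdot)$ with $x(t_0)=x_0$.

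The main obstacle is showing that this limit actually solves the inclusion, i.e. $\dot x(t)\in F(x(t),t)$ for almost every $t$. The polygon derivatives satisfy $\dot x_N(t)\in F(x_N(\tau_N(t)),\tau_N(t))$, where $\tau_N(t)$ is the left grid point, and $(x_N(\tau_N(t)),\tau_N(t))\to(x(t),t)$; but the derivatives converge only weakly, so one cannot pass to the limit pointwise. Here I would invoke a Filippov/Mazur closure argument: the velocities $\dot x_N$ are bounded in $L^\infty$, hence weakly-$\ast$ convergent (along a further subsequence) to $\dot x$, and Mazur's lemma produces convex combinations converging strongly, hence a.e. For fixed $\varepsilon>0$ and a.e. $t$, upper semicontinuity gives $F(x_N(\tau_N(t)),\tau_N(t))$ inside the $\varepsilon$-neighborhood of the convex closed set $F(x(t),t)$ for large $N$; since that $\varepsilon$-neighborhood is itself convex, every convex combination of the $\dot x_N(t)$ stays in it, so the strong a.e. limit $\dot x(t)$ lies in its closure. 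Letting $\varepsilon\to0$ and using closedness yields $\dot x(t)\in F(x(t),t)$. This is exactly the step in which convexity is indispensable.

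Finally, for the continuation statement with $G$ closed and bounded, I would argue by maximality. On the compact set $G$ there is a uniform bound $|y|\le M$, so every solution is $M$-Lipschitz and uniformly continuous; consequently, as $t$ approaches the right endpoint $t^*$ of the maximal interval of existence, $x(t)$ has a limit $x^*$, and $(x^*,t^*)\in G$ because $G$ is closed. Were $(x^*,t^*)$ an interior point of $G$, the local existence result just established would let me restart a solution from $(x^*,t^*)$ and concatenate, contradicting maximality; hence $(x^*,t^*)$ lies on $\partial G$. The identical reasoning at the left endpoint gives continuation up to the boundary on both sides.
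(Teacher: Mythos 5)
This theorem is quoted in the paper from Filippov's book \cite{filippov2013differential} without proof, so there is no in-paper argument to compare against; your proposal has to stand on its own, and it does. It is in fact a faithful reconstruction of the classical Filippov argument: Euler polygonal approximations in a cylinder $Q$ on which $F$ is uniformly bounded (the bound following from upper semicontinuity plus compactness of $Q$ via a finite cover), Arzel\`a--Ascoli to extract a uniform limit, and then the closure step in which convexity and closedness of the values are exactly what allow the weak-$\ast$ limit $\dot x$ of the polygon velocities to be placed in $F(x(t),t)$: you pass through the closed convex $\varepsilon$-neighborhood of $F(x(t),t)$, which absorbs convex combinations, and then shrink $\varepsilon$. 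Two standard details should be made explicit in a final writeup. First, Mazur's lemma must be applied to the tails $(\dot x_{N_j})_{j\ge k}$, so that the $k$-th convex combination uses only indices large enough that upper semicontinuity has already placed the corresponding velocity sets inside the $\varepsilon$-neighborhood of $F(x(t),t)$; otherwise the quantifiers in ``every convex combination stays in it'' do not match, since the threshold index depends on $t$ and $\varepsilon$. Second, in the continuation argument the maximal solution exists by Zorn's lemma (the union of a chain of mutually extending solutions is a solution), and before restarting at $(x^*,t^*)$ you should extend the solution to the closed interval by setting $x(t^*)=x^*$, noting that the extension is still absolutely continuous and satisfies the inclusion almost everywhere. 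Neither point is a gap --- both are routine --- and the overall structure (local existence; then the uniform Lipschitz bound on the compact set $G$ to produce the endpoint limit $x^*$, closedness of $G$ to keep $(x^*,t^*)$ in $G$, and local existence again to exclude interior endpoints) is essentially how the result is proved in \cite{filippov2013differential}.
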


From this theorem, it follows that for the system (\ref{eq1}), solutions exist for all $t > t_0$. Indeed, set-valued function  $\Phi$ is periodic in $q$ and it can be shown that if $p > 0$ and
$$
p^2 > p_*^2 = \left( g + \max\limits_{t \in [t_0,t_1]}|\ddot \xi| \right) \left(1 + \frac{1}{\mu} \right)\frac{1}{l},
$$
then $\dot p < 0$ for any $t \in [t_0, t_1]$. Similarly, for $p<0$ and large $|p|$, we have $\dot p > 0$. Therefore, solutions can be continued to an arbitrarily long time interval. From the below result, it also follows that solutions of (1) depend continuously on initial data.
\begin{theorem} \cite{filippov2013differential}
Let $F$ be an upper semicontinuous set-valued function in a domain $G \subset \mathbb{R}^{n+1}$, and for all $(x, t) \in G$, $F(x, t)$ is a non-empty, bounded, closed and convex set; $t_0 \in [a, b]$, let all the solutions of the problem
$$
\dot x \in F(x,t), \quad x(t_0) = x_0
$$
exist for $a \leqslant t \leqslant b$ and their graphs lie in $G$. Then for any $\varepsilon > 0$ there exists $\delta > 0$, such that for any $t_0^* \in [a,b]$ and $x_0^*$, $|t_0^* - t_0|<\delta$ and $|x_0^* - x_0|<\delta$, each solution with initial conditions $t = t_0^*$, $x = x_0^*$ exists and differs (w.r.t. the uniform norm) from some solution with initial conditions $t = t_0$, $x = x_0$ by not more than $\varepsilon$.
\end{theorem}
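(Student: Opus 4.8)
The plan is to argue by contradiction, combining an Arzel\`a--Ascoli compactness argument with a closure lemma to the effect that a uniform limit of solutions of (\ref{eq3}) is again a solution. Suppose the conclusion fails for some $\varepsilon > 0$. Then there are sequences $t_0^{(k)} \to t_0$ in $[a,b]$ and $x_0^{(k)} \to x_0$ for which the assertion is violated. Since the graphs of all solutions through $(x_0,t_0)$ lie in the open domain $G$ and are compact, they are contained in the interior of a compact set $K \subset G$, on which $F$ is bounded by some constant $L$. Using the existence theorem stated above (local existence plus continuation to $\partial G$) together with this a priori bound on $|\dot x|$, one argues that for $k$ large the perturbed solutions cannot leave $K$ before time $b$, so they exist on all of $[a,b]$; hence the negation reduces to the existence of solutions $x_k$ on $[a,b]$ with $x_k(t_0^{(k)}) = x_0^{(k)}$, each at uniform distance greater than $\varepsilon$ from every solution of $\dot x \in F(x,t)$, $x(t_0)=x_0$.

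On $K$ we have $|\dot x_k(t)| \le L$ for a.e.\ $t$, so the $x_k$ are uniformly bounded and uniformly Lipschitz, hence equicontinuous. By Arzel\`a--Ascoli a subsequence (not relabelled) converges uniformly on $[a,b]$ to a Lipschitz, and therefore absolutely continuous, function $x_*$. Passing to the limit in the initial data, the estimate
$$
|x_*(t_0) - x_0| \le |x_*(t_0) - x_*(t_0^{(k)})| + |x_*(t_0^{(k)}) - x_k(t_0^{(k)})| + |x_0^{(k)} - x_0|
$$
shows $x_*(t_0) = x_0$.

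The crux is the closure lemma: $x_*$ is itself a solution, i.e.\ $\dot x_*(t) \in F(x_*(t),t)$ at every differentiability point $t$. Fix such a $t$ and $\eta > 0$. By upper semicontinuity of $F$ at $(x_*(t),t)$ (understood jointly in the pair $(x,t)$, consistent with $G \subset \mathbb{R}^{n+1}$) there is $\delta>0$ with $F(y,s) \subseteq K_\eta := F(x_*(t),t) + \eta \overline{B}$ whenever $|y - x_*(t)| + |s-t| < \delta$, where $\overline{B}$ is the closed unit ball; the set $K_\eta$ is closed and convex because $F(x_*(t),t)$ is. Using the uniform convergence $x_k \to x_*$ and the continuity of $x_*$, for all small $h>0$, all $s \in [t,t+h]$, and all large $k$ one has $\dot x_k(s) \in F(x_k(s),s) \subseteq K_\eta$, whence by convexity and closedness of $K_\eta$,
$$
\frac{x_k(t+h)-x_k(t)}{h} = \frac{1}{h}\int_t^{t+h} \dot x_k(s)\,ds \in K_\eta .
$$
Letting $k \to \infty$ and then $h \to 0$ (each time invoking that $K_\eta$ is closed) gives $\dot x_*(t) \in K_\eta$, and finally $\eta \to 0$ with $F(x_*(t),t)$ closed yields $\dot x_*(t) \in F(x_*(t),t)$.

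Thus $x_*$ is a solution with $x_*(t_0) = x_0$. But $x_k \to x_*$ uniformly, so $\| x_k - x_* \|_\infty < \varepsilon$ for large $k$, contradicting that every $x_k$ lies at distance greater than $\varepsilon$ from all such solutions. This contradiction proves the theorem. I expect the main difficulty to be the closure lemma, and in particular the essential use of convexity of the values $F(x,t)$: without it the integral average of $\dot x_k$ need not remain in the enlarged set $K_\eta$, and the uniform limit could fail to solve the inclusion. The confinement of the perturbed solutions to the compact set $K$, needed to secure their existence on the whole of $[a,b]$, is a secondary technical point resolved by the a priori Lipschitz bound together with the continuation statement of the existence theorem.
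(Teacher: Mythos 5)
A preliminary remark: the paper itself offers no proof of this theorem --- it is imported verbatim from Filippov's book \cite{filippov2013differential} --- so your proposal can only be compared with the standard argument given there. Your architecture (contradiction, Arzel\`a--Ascoli, closure lemma) \emph{is} that standard argument, and the part you correctly identify as the crux, the closure lemma, is done right: joint upper semicontinuity gives $F(y,s)\subseteq F(x_*(t),t)+\eta\overline{B}$ near $(x_*(t),t)$, integral averages of $\dot x_k$ stay in this closed convex set, and the successive limits $k\to\infty$, $h\to 0^+$, $\eta\to 0$ give $\dot x_*(t)\in F(x_*(t),t)$ at every differentiability point, hence almost everywhere. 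Convexity and closedness of the values are used exactly where they are indispensable.

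The genuine gap is the step you dismiss as secondary. You assert (i) that the union of the graphs of all solutions through $(x_0,t_0)$ (the integral funnel) is compact, hence admits a compact neighbourhood $K\subset G$ on which $F$ is bounded by $L$, and (ii) that this a priori Lipschitz bound together with the continuation theorem keeps the perturbed solutions inside $K$ up to time $b$. Neither claim follows from what you invoke. For (i): each individual graph is compact, but closedness and boundedness of the union require uniform Lipschitz estimates for the whole family of solutions, which you only obtain after you know the family lies in a compact subset of $G$; the reasoning is circular, and the circle is broken by a separate bootstrapping argument that propagates compactness of the funnel along the time interval (this is a standalone lemma in \cite{filippov2013differential}). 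For (ii): $F$ is merely upper semicontinuous, so no Gronwall-type inequality is available, and the bound $|\dot x_k|\leqslant L$ on $K$ does nothing to prevent $x_k$ from drifting across $K$ and exiting through $\partial K$ at a time $\tau_k$ bounded away from $b$. The repair is another round of your own machinery: truncate each $x_k$ at its first exit time from $K$, apply Arzel\`a--Ascoli and the closure lemma to the truncations, and observe that the limit would be a solution through $(x_0,t_0)$ whose graph meets $\partial K$ --- impossible, since the funnel lies in the interior of $K$. With (i) proved and (ii) repaired in this way your proof closes; as written, it has a hole precisely where the existence of the perturbed solutions on all of $[a,b]$ is claimed.
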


\begin{definition}
We say that (\ref{eq3}) has a right-unique solution at a point $(x_0, t_0)$ if there exists $t_1 > t_0$ such that each two solutions of the differential inclusion satisfying the condition $x(t_0) = x_0$ coincide on $[t_0, t_1]$.
\end{definition}

Let us now show that, for given initial conditions, the solution of (\ref{eq2}) is right-unique. The following result was also proved in \cite{filippov2013differential}
\begin{theorem}
Let a function $f(t,x)$ in a domain $G$ be discontinuous only on a set of measure zero. Let there exist a summable function $l(t)$ such that for almost all points $(x,t)$ and $(y,t)$ of the domain $G$ we have $f(x,t) \leqslant l(t)$ and for $|x - y| < \varepsilon_0$,  $\varepsilon_0 > 0$, the following holds
\begin{equation}
\label{eq4}
(x-y)\cdot(f(x,t) - f(y,t)) \leqslant l(t) |x-y|^2.
\end{equation}
Then any solution of the corresponding differential inclusion (\ref{eq3}) is right-unique in the domain $G$.
\end{theorem}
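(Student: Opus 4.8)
The plan is to reduce right-uniqueness to a differential inequality for the squared distance between two solutions, and to control that inequality by a one-sided (monotonicity) estimate that must first be transferred from the single-valued $f$ to the Filippov set-valued map $F$. Suppose $x(t)$ and $y(t)$ are two solutions of (\ref{eq3}) with $x(t_0)=y(t_0)=x_0$, and set $z(t)=|x(t)-y(t)|^2$. Since $x$ and $y$ are absolutely continuous (the bound $|f|\leqslant l(t)$ together with the definition of $F$ guarantees integrable derivatives), $z$ is absolutely continuous, and for almost every $t$ one has $\dot z(t)=2\,(x(t)-y(t))\cdot(\dot x(t)-\dot y(t))$, with $\dot x(t)\in F(x(t),t)$ and $\dot y(t)\in F(y(t),t)$ almost everywhere. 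The whole argument then hinges on the estimate $\dot z(t)\leqslant 2\,l(t)\,z(t)$, valid as long as $|x(t)-y(t)|<\varepsilon_0$.

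The key step, and the one I expect to be the main obstacle, is to promote the hypothesis (\ref{eq4}), which holds only for almost all pairs of points, into a genuine inequality for every pair of points once $f$ is replaced by $F$. Concretely, I would prove that for almost every $t$ and all $x,y$ with $|x-y|<\varepsilon_0$, every $u\in F(x,t)$ and $w\in F(y,t)$ satisfy $(x-y)\cdot(u-w)\leqslant l(t)\,|x-y|^2$. By the construction of $F$, the vectors $u$ and $w$ are convex combinations of limits of values $f(x_k,t)$ and $f(y_k,t)$ along sequences $x_k\to x$, $y_k\to y$ avoiding the discontinuity set $M$. Since for a.e. fixed $t$ the set of pairs violating (\ref{eq4}) is null, I can perturb these sequences so that (\ref{eq4}) holds along them, pass to the limit (the left-hand side is continuous in the values of $f$ and in $x,y$), and obtain the inequality for the limit points; bilinearity together with $\sum_i\lambda_i=\sum_j\mu_j=1$ then extends it to arbitrary convex combinations $u=\sum_i\lambda_i u_i$ and $w=\sum_j\mu_j w_j$. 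The delicate point is the measure-zero bookkeeping in two variables: I must ensure, using Fubini and the fact that $M$ is null, that for almost every $t$ the exceptional set in the $(x,y)$ plane is null and that the approximating sequences can be chosen to avoid it simultaneously.

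Granting this transfer, the conclusion is routine. On the right-neighborhood of $t_0$ where $|x(t)-y(t)|<\varepsilon_0$, which exists by continuity since $z(t_0)=0$, combining $\dot z(t)=2\,(x-y)\cdot(\dot x-\dot y)$ with the transferred estimate gives $\dot z(t)\leqslant 2\,l(t)\,z(t)$ for a.e. $t$. Because $z$ is absolutely continuous, $z(t_0)=0$, and $l$ is summable, Grönwall's inequality yields $z(t)\leqslant z(t_0)\exp\!\big(2\textstyle\int_{t_0}^{t}l(s)\,ds\big)=0$; as $z\geqslant 0$ this forces $z\equiv 0$, i.e.\ $x\equiv y$, on some $[t_0,t_1]$ with $t_1>t_0$. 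This is precisely right-uniqueness at $(x_0,t_0)$, and since $(x_0,t_0)\in G$ was arbitrary, the result holds throughout $G$.
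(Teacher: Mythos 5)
There is no in-paper proof to compare against: the paper states this theorem as a quotation from Filippov's book \cite{filippov2013differential} and uses it as a black box (what the paper proves itself is the subsequent Corollary, namely that the pendulum system satisfies hypothesis (\ref{eq4})). Judged on its own, your argument is correct and is essentially the classical one: (i) upgrade the almost-everywhere estimate (\ref{eq4}) on $f$ to one valid, for a.e.\ $t$, for \emph{all} pairs $x,y$ and \emph{all} $u\in F(x,t)$, $w\in F(y,t)$; (ii) apply it along two solutions issuing from the same point to get $\dot z\leqslant 2l(t)z$ for $z=|x-y|^2$, and conclude by Gr\"onwall (best run with the integrating factor $\exp\bigl(-2\int_{t_0}^t l(s)\,ds\bigr)$, so that no sign condition on $l$ is needed). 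You rightly identify (i) as the crux, since the a.e.\ hypothesis by itself cannot be evaluated along two specific solution curves. Where you genuinely diverge from Filippov is in the proof of (i): he treats measurable, essentially bounded right-hand sides via essential convex hulls (averaging over small balls), whereas you exploit the paper's definition of $F$ through limit values of $f(x^*,t)$ with $(x^*,t)\notin M$. Your perturbation of the approximating sequences is legitimate, but you should make explicit the fact it silently rests on: since $M$ is precisely the discontinuity set, $f(\cdot,t)$ is \emph{continuous} at each point $x_k$, $y_k$ of the approximating sequences, and it is this continuity --- not merely that $M$ is null --- that lets you move each pair $(x_k,y_k)$ into the full-measure set where (\ref{eq4}) holds (nonempty in every small bidisk, by Fubini) while altering $f(x_k,t)$ and $f(y_k,t)$ only by $o(1)$; bilinearity of the left-hand side in $(u,w)$ and closedness of the inequality then carry it to the closed convex hulls. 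With that spelled out the proof is complete, and it has the mild advantage over Filippov's lemma of being elementary, at the (harmless, under the stated hypotheses) cost of requiring $f$ to be continuous off a null set rather than merely measurable.
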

\begin{remark}
As usual, we say that function $l \colon \mathbb{R} \to \mathbb{R}$ is summable if it is Lebesgue integrable and
$$
\int\limits_K|l(t)|\, dt < \infty,
$$
For any compact $K$. Below we consider only constant functions $l(t) = l$ that are always summable.
\end{remark}
\begin{corollary}
The solutions of (\ref{eq1}) are right-unique.
\end{corollary}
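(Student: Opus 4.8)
The plan is to apply the preceding right-uniqueness theorem directly to the right-hand side $f=(f_1,f_2)$ of (\ref{eq1}), with state $x=(q,p)$, where $f_1=p$ and $f_2$ is the second component of (\ref{eq1}); its only discontinuity is the plane $p=0$, a set of measure zero, as required. Since right-uniqueness is local (the definition only asks for agreement on some $[t_0,t_1]$), I would fix an initial point $(q_0,p_0,t_0)$ and work on a bounded domain $G=\{\,|p|\le R,\ t\in[t_0,t_1]\,\}$ with $R>|p_0|$ and $t_1$ close to $t_0$, so that solutions issuing from $(q_0,p_0,t_0)$ remain in $G$ for a short time. On such a domain $\ddot\xi$ is continuous hence bounded, so $f$ is bounded, furnishing the constant summable majorant $l(t)\equiv L$ required by the theorem.

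The substance is the one-sided estimate (\ref{eq4}). Writing $x=(q_1,p_1)$, $y=(q_2,p_2)$, I would expand
$$(x-y)\cdot\big(f(x,t)-f(y,t)\big)=(q_1-q_2)(p_1-p_2)+(p_1-p_2)\big(f_2(x,t)-f_2(y,t)\big),$$
bound the first term by $\tfrac12|x-y|^2$ via the elementary inequality $(q_1-q_2)(p_1-p_2)\le\tfrac12(q_1-q_2)^2+\tfrac12(p_1-p_2)^2$, and split the $f_2$-difference into a pure $q$-variation $f_2(q_1,p_1,t)-f_2(q_2,p_1,t)$ and a pure $p$-variation $f_2(q_2,p_1,t)-f_2(q_2,p_2,t)$. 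The $q$-variation is controlled by ordinary Lipschitz continuity: on $G$ the map $q\mapsto f_2$ is assembled from $\sin q$, $\cos q$ and a continuous absolute value of such terms, hence Lipschitz with a constant $L_1$ uniform in $(p,t)$, giving $(p_1-p_2)\big(f_2(q_1,p_1,t)-f_2(q_2,p_1,t)\big)\le L_1|p_1-p_2|\,|q_1-q_2|\le\tfrac{L_1}{2}|x-y|^2$.

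Everything then hinges on the pure $p$-variation at fixed $q=q_2$, where the discontinuity lives. Here $f_2(q_2,p,t)=C(q_2,t)+\tfrac{\mu}{l}\,g(p)$ with $g(p)=-|B-lp^2|\operatorname{sgn}(p)$ and $B=B(q_2,t)$ independent of $p$. On each half-line $p>0$ and $p<0$ the function $g$ is continuous and piecewise $C^1$ with $|g'(p)|=2l|p|\le 2lR$, so it is Lipschitz there and same-sign pairs contribute at most $2\mu R\,|p_1-p_2|^2$. The crux is opposite-sign pairs: the friction term produces a \emph{downward} jump at $p=0$, since $g(p_1)\le 0\le g(p_2)$ whenever $p_1>0>p_2$, whence $(p_1-p_2)\big(g(p_1)-g(p_2)\big)\le 0$. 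This is precisely the statement that Coulomb friction opposes the motion, and it renders the cross-zero contribution non-positive rather than unbounded — the reason the inclusion is dissipative in the sense of (\ref{eq4}). Collecting the three pieces yields (\ref{eq4}) with, say, $L=\tfrac12+\tfrac{L_1}{2}+2\mu R$, and the theorem delivers right-uniqueness on $[t_0,t_1]$. I expect the main obstacle to be establishing the correct sign of this friction jump cleanly while simultaneously tracking the corners of $g$ at the points $lp^2=B$, where $g$ is only piecewise smooth but remains continuous and so does not affect the Lipschitz bound.
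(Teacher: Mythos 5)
Your proposal is correct: it verifies exactly the hypothesis \eqref{eq4} of the same Filippov right-uniqueness theorem invoked by the paper, and every step (boundedness of $f$ on a local domain, the Lipschitz bounds, and the sign of the cross-zero friction term) checks out. However, your decomposition is genuinely different from the paper's. The paper argues geometrically: for two points $(q_1,p_1,t)$, $(q_2,p_2,t)$ on opposite sides of the plane $p=0$, it takes the point $(q,0,t)$ where the connecting segment crosses that plane, uses Lipschitz continuity on each side to compare $f(q_1,p_1,t)$ with the one-sided limit $f^+(q,0,t)$ and $f(q_2,p_2,t)$ with $f^-(q,0,t)$, and then disposes of the remaining jump term by noting that $f^+ - f^-$ is parallel to the normal $n$ of the discontinuity plane with $n\cdot\bigl(f^+ - f^-\bigr)\leqslant 0$, while $(x-y)\cdot n > 0$. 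You instead telescope coordinate-wise: the term $(q_1-q_2)(p_1-p_2)$ and the $q$-variation of $f_2$ are absorbed by Young's inequality and a uniform Lipschitz constant, and the cross-zero $p$-variation is non-positive because, with $q$ and $t$ frozen, $g(p)=-\left|B-lp^2\right|\operatorname{sgn}(p)$ satisfies $g\leqslant 0$ for $p>0$ and $g\geqslant 0$ for $p<0$ pointwise. Both arguments hinge on the same dissipativity of the Coulomb jump, so the physical content is identical; what differs is what each buys. Your route is more elementary and produces explicit constants, but it leans on the separable structure $f_2 = C(q,t)+\tfrac{\mu}{l}\,g(p)$ and on the global sign of $g$ on each half-plane. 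The paper's segment argument uses only piecewise Lipschitz continuity of $f$ together with the sign of the jump at the discontinuity surface itself, which is why it extends verbatim to other discontinuity geometries and other friction laws (the paper's concluding section exploits exactly this flexibility, e.g.\ for Stribeck-type models), and why the paper's remark can claim a wider class of admissible right-hand sides than Filippov's original twice-differentiability assumption.
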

\begin{proof}
Let $G$ be a bounded domain in $\mathbb{R}^{3}$, by $G^+$ we denote $\{ p > 0 \} \cap G$. Similarly,  $G^- = \{ p < 0 \} \cap G$. By $f(q,p,t)$ we denote the right-hand side of the system (1). Let $f^+(q,0,t)$ and $f^-(q,0,t)$ be the limiting values of the function $f$ at the point $(q,0,t)$, from $G^+$ and $G^-$, correspondingly. Let $n$ be a vector directed toward increasing values of $p$. From (\ref{eq1}) we have
$$
n \cdot f^+(q,0,t) = \frac{\ddot\xi}{l}\sin q -\frac{\mu}{l}\left| \ddot\xi \cos q - lp^2 + g\sin q \right| - \frac{g}{l}\cos q,
$$
and
$$
n \cdot f^-(q,0,t) = \frac{\ddot\xi}{l}\sin q + \frac{\mu}{l}\left| \ddot\xi \cos q - lp^2 + g\sin q \right| - \frac{g}{l}\cos q.
$$
Therefore, $n \cdot f^+(q,0,t) \leqslant n \cdot f^-(q,0,t)$. Let us now show that for almost all points $(q_1,p_1,t)$ and $(q_2, p_2,t)$, inequality (4) holds.

If both points are in $G^+$ or in $G^-$, then the inequality follows from the fact that the right-hand side is Lipschitz continuous (in this case, we can put $l(t)$ to be a constant). Now suppose that $(q_1,p_1,t) \in G^+$ and $(q_2, p_2,t) \in G^-$. By $(q, 0, t)$ we denote the point of intersection of the line segment connecting $(q_1,p_1,t)$ and $(q_2, p_2,t)$ with the plane $p = 0$. Since $f$ is Lipschitz continuous in $G^-$ and $G^+$, then for some constant $l$, we have the following inequalities.
\begin{equation*}
\begin{aligned}
&|f(q_1, p_1, t) - f^+(q, 0, t)| \leqslant l |(q_1, p_1,t) - (q, 0,t)|,\\
&|f^-(q, 0, t) - f(q_2, p_2, t)| \leqslant l |(q_2, p_2,t) - (q, 0,t)|.
\end{aligned}
\end{equation*}
From these inequalities and the fact that the points $(q_1,p_1,t)$, $(q,0,t)$, $(q_2,p_2,t)$ are on the same line, we have
\begin{equation*}
\begin{aligned}
|f(q_1, p_1, t) - f^+(q, 0, t) + f^-(q, 0, t) - f(q_2, p_2, t)| \leqslant l |(q_1, p_1,t) - (q_2, p_2,t)|.
\end{aligned}
\end{equation*}
Therefore,
\begin{equation*}
\begin{aligned}
((q_1, p_1,t) - (q_2, p_2,t))&\cdot (f(q_1, p_1, t) - f^+(q, 0, t) + f^-(q, 0, t) - f(q_2, p_2, t)) \\&\leqslant l |(q_1, p_1,t) - (q_2, p_2,t)|^2.
\end{aligned}
\end{equation*}
Note that $f^+(q, 0, t) - f^-(q, 0, t)$ is parallel to $n$ or equals zero, therefore
$$
((q_1, p_1,t) - (q_2, p_2,t)) \cdot (f^+(q, 0, t) - f^-(q, 0, t)) \leqslant 0.
$$
Finally, if we sum the last two inequalities, we obtain (\ref{eq4}). When $G$ is not a bounded region, it can be presented as a union of bounded sets, in which the solutions are right-unique.
\end{proof}
\begin{remark}
We note, that the presented proof is similar to the one in \cite{filippov2013differential}, yet it covers a wider class of functions (in \cite{filippov2013differential}, $f$ is assumed to be twice-differentiable almost everywhere).
\end{remark}

We have shown that any solution of (\ref{eq1}) exists for all $t \geqslant t_0$, this solution is right-unique and continuously depends on initial conditions. From these properties, we obtain the following result.
\begin{proposition}\label{propo1}
There exist $q_0 \in [0, \pi]$, $p_0$ such that for the solution $(q(t), p(t))$ of (\ref{eq1}) with the corresponding initial conditions $q(t_0) = q_0$, $p(t_0) = p_0$, the following holds $q(t) \in [0, \pi]$ for all $t > t_0$.
\end{proposition}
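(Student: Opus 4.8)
The plan is to run a topological ``shooting'' argument built on the continuous dependence and right-uniqueness established above. I restrict attention to the one-parameter family of initial conditions at rest, $q(t_0)=q_0\in[0,\pi]$, $p(t_0)=0$, and classify each $q_0$ according to whether the corresponding forward solution eventually leaves the strip $[0,\pi]$ through the face $q=0$ or through the face $q=\pi$. Recall that every such solution exists for all $t\ge t_0$, is right-unique, and — because the a priori bound $p_*$ confines the trajectory to a compact region on each interval $[t_0,t_1]$ — depends continuously on $q_0$ uniformly on $[t_0,t_1]$ (by the continuous-dependence theorem together with the right-uniqueness corollary above). The guiding idea is that the two endpoints $q_0=0$ and $q_0=\pi$ fall to opposite sides, the two ``falling'' sets are open and disjoint, and hence by connectedness of $[0,\pi]$ they cannot cover it; any leftover initial angle then produces a solution that never falls.

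First I would record the mechanical behaviour at the two faces. At $q=0$ one has $\sin q=0$, so the pivot term $\frac{\ddot\xi}{l}\sin q$ vanishes and the free angular acceleration reduces to $-\frac{g}{l}<0$; dry friction can at most hold the pendulum in place, so a solution touching $q=0$ either sticks there (remaining in $[0,\pi]$) or moves into $q<0$, but never into $q>0$. Symmetrically, at $q=\pi$ the free acceleration is $+\frac{g}{l}>0$, and a solution touching that face either sticks or moves into $q>\pi$. Thus the faces are ``one-way'': a trajectory cannot re-enter $(0,\pi)$ after reaching a face except by sticking on it. This is where the sign of gravity, and the vanishing of the horizontal-forcing term exactly at the horizontal positions, enter decisively; note that the conclusion is independent of $\ddot\xi$.

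Next, writing $\tau_L(q_0)=\inf\{t>t_0:q(t)<0\}$ and $\tau_R(q_0)=\inf\{t>t_0:q(t)>\pi\}$ for the lifted solution, I set $S_L=\{q_0:\tau_L<\tau_R\}$, $S_R=\{q_0:\tau_R<\tau_L\}$ and $S_\infty=\{q_0:\tau_L=\tau_R=\infty\}$. To see that $S_L$ is open, take $q_0\in S_L$ and a time $t_1>\tau_L$ with $q(t_1)<0$; the one-way property forces $q<\pi$ on all of $[t_0,t_1]$, so both $q(t_1)<0$ and $\max_{[t_0,t_1]}q<\pi$ are open conditions preserved under a small uniform perturbation of the solution, and continuous dependence then yields $q_0'\in S_L$ for all $q_0'$ near $q_0$. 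The set $S_R$ is open by the same reasoning. The endpoints are handled by the boundary analysis: starting from $(0,0)$ the pendulum either sticks (so $q\equiv 0$ and the proposition holds) or falls through $q=0$ without ever reaching $\pi$, whence $0\in S_L\cup S_\infty$; likewise $\pi\in S_R\cup S_\infty$. If either endpoint lies in $S_\infty$ we are done. Otherwise $S_L$ and $S_R$ are nonempty, open and disjoint, so by connectedness of $[0,\pi]$ they cannot exhaust it; choosing $q_0^*\in[0,\pi]\setminus(S_L\cup S_R)=S_\infty$ gives a solution with $q(t)\in[0,\pi]$ for all $t>t_0$, which is the assertion (with $p_0=0$).

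I expect the main obstacle to be the openness of $S_L$ and $S_R$ in the presence of the non-smoothness of the system. This is exactly where the two preliminary results are indispensable: right-uniqueness makes the solution operator $q_0\mapsto(q(\cdot),p(\cdot))$ single-valued, so that the continuous-dependence theorem becomes genuine continuous dependence rather than mere closeness to \emph{some} solution. The delicate sub-case is a tangential first exit, where the trajectory reaches a face with $p=0$; there I would not argue from transversality of the crossing but instead evaluate at a slightly later instant $t_1$ at which $q(t_1)$ is already strictly outside $[0,\pi]$, so that the required robustness follows from continuity of the value at the fixed time $t_1$. The one-way boundary behaviour is what keeps $S_L$ and $S_R$ disjoint and blocks spurious re-entry, and together with connectedness it converts the non-emptiness of $S_\infty$ into the existence of a motion that never falls below the horizontal.
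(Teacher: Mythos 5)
Your architecture is essentially the paper's own: a Wa\.zewski-type shooting argument over a one-parameter arc of initial conditions, continuous dependence upgraded to genuine (forward) continuity by right-uniqueness, openness of the two ``falling'' sets obtained by evaluating at a fixed later time $t_1$ at which $q(t_1)$ is strictly outside the strip, and connectedness of the arc to conclude (the paper phrases the last step as a no-retraction contradiction for a transversal curve $p=\sigma(q)$ with $\sigma(0)<0<\sigma(\pi)$, but that is the same topology). The genuine gap is at the step on which everything else rests: the ``one-way'' property of the faces. You justify it by ``dry friction can at most hold the pendulum in place,'' but as a statement about the Filippov inclusion this is false pointwise. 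At $(q,p)=(0,0)$ the convexification of the limits from $p>0$ and $p<0$ gives $\dot p \in \bigl[-(g+\mu|\ddot\xi|)/l,\ (\mu|\ddot\xi|-g)/l\bigr]$, and the right endpoint is \emph{positive} whenever $\mu|\ddot\xi|>g$ (it comes from the $p<0$ branch of the friction law, which the convex hull retains). So the inclusion itself does not forbid upward acceleration at the corner, and no purely pointwise appeal to the friction model can. The correct argument --- the one the paper gives --- is regional: for $p>0$ and $|q|$ small, the right-hand side is single-valued and satisfies $\dot p \leqslant \frac{|\ddot\xi|}{l}|\sin q| - \frac{g}{l}\cos q < 0$, hence no solution can pass from the rest point on the face into the region $\{p>0\}$ (at the last time $p$ vanishes it would have to increase from zero while $\dot p<0$); consequently, a solution leaving the corner has $p\leqslant 0$ on a short interval, with $p<0$ somewhere (else it has not left), so $q$ becomes negative. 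Without this sign analysis near the corner, your classification of the endpoints, the disjointness bookkeeping, and the exclusion of re-entry at tangential touches are all unproven.

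A secondary inaccuracy: your blanket claim that ``a trajectory cannot re-enter $(0,\pi)$ after reaching a face except by sticking on it'' is too strong and, in general, false. After a transversal exit ($p<0$ at $q=0$), a suitable pivot acceleration can pump the pendulum back up: for $q$ sufficiently below the horizontal and $|\ddot\xi|$ large, the term $\frac{\ddot\xi}{l}\sin q$ dominates friction and gravity, $p$ becomes large and positive, and the trajectory then crosses the thin region near $q=0$ (where $\dot p<0$ merely makes $p$ decrease slightly) and re-enters $(0,\pi)$. Fortunately your proof never needs this strong version: $S_L$ and $S_R$ are disjoint by definition, and the openness argument only uses the one-way property at \emph{tangential} touches, where $p=0$ --- which is exactly the statement that requires the regional argument above.
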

\begin{proof}
First, consider (\ref{eq1}) in the domain $G = \{ 0 < q < \pi \}$. Any solution leaving $G$ can be continued up to the boundary of $G$. At the same time, the solution cannot leave $G$ at the points where $q = 0$ and $p > 0$ or where $q = \pi$ and $p < 0$. Therefore, for any solution starting in $G $ there are three possibilities: it can never leave $G$; it can leave $G$ through the set $q = 0$, $p < 0$ or through the set $q = \pi$, $p > 0$; it can leave $G$ through the set $q = 0$, $p = 0$ or $q = \pi$, $p = 0$.

Let us now consider a continuous curve $p = \sigma(q)$, $t = t_0$, $0 \leqslant q \leqslant \pi$, where $\sigma$ is a continuous function and $\sigma(0) < 0$, $\sigma(\pi) > 0$. Consider all the solutions starting at this curve. Suppose that all these solutions leave $G$.

If some solution leaves $G$ through the set $q = 0$, $p < 0$ or $q = \pi$, $p > 0$, then all the solutions starting from close initial conditions also leave $G$ through close boundary points. It follows from the continuous dependence on the initial data.

Now consider the case when some solution, starting at the considered curve, reaches the line $q = 0$, $p = 0$ for the first time at moment $t = t^*$. This solution either stays in $q = 0$, $p = 0$ for all $t \geqslant t^*$ or leaves it at some $t = t^{**}$. If it stays in the line for all $t \geqslant t^*$, then it is the required solution. However, above we have supposed that all solutions leave $G$, i.e., our solution leaves line $q = 0$, $p = 0$ at $t = t^{**}$, where $t^{**} = t^* + \sup \{ \Delta t \geqslant 0 \colon q(t^* + t) = 0, \, p(t^* + t) = 0,\quad \forall\, 0 \leqslant t \leqslant \Delta t \}$. There are two possibilities: the pendulum can start moving either outside or inside the set $G$. If it moves inside $G$, then the map between the curve and the boundary $\partial G$ may become discontinuous because then there is a possibility for two close solutions to leave $G$ through the different components of the boundary ($q = 0$ and $q = \pi$). Below we prove that it is not the case.

For small $|q|$ and $p > 0$, we have $\dot p < 0$. Therefore, our solution can leave the line only to the set where $p \leqslant 0$, i.e., there exists $t^{***} > t^{**}$ such that $p(t) \leqslant 0$, for all $t \in [t^{**}, t^{***}]$. Moreover, for some $t \in [t^{**}, t^{***}]$ we have $p(t) < 0$ (if it is not true, then the solution do not leave the line at $t = t^{**}$). Since $\dot q = p$, we obtain that our solution, and solutions close to it, leave $G \cup \partial G$. Similarly, one can prove that if some solution reaches the line $p = 0$, $q = \pi$, it either stay in it forever or leaves $G \cup \partial G$.

Consider the continuous map from $\partial G$ to the points $q = 0$, $p = \sigma(0)$ and $q = \pi$, $p = \sigma(\pi)$ that maps two connected components of the boundary to these two points, respectively (to be more precise, it maps the plane $q = 0$ into the point $q = 0$, $p = \sigma(0)$, $t = 0$ and the plane $q = \pi$ into the point $q = \pi$, $p = \sigma(\pi)$ $t = 0$). We supposed that all solutions starting at the curve leave $G \cup \partial G$. Previously, we have shown that in this case there exists a continuous map between the curve $\sigma$ and the set $\partial G$. This map This map is a correspondence that assigns to every point of $\sigma$ the point of the first exit of the corresponding solution from $G$. Now we can consider the composition of the above two continuous maps. Finally, we obtain a continuous map between the curve and its boundary points. This contradiction proves the proposition.

\end{proof}
Note that from the proof it also follows that there exist infinitely many solutions without falling. Indeed, a one-parameter family of such solutions can be obtained if we consider a family of non-intersecting curves $\sigma(q)$.

Similarly, one can prove the following result, which contains sufficient conditions for the existence of a solution staying in $(0, \pi)$ for all $t \geqslant t_0$.
\begin{proposition}
Suppose that $\mu|\ddot \xi| < g$ for all $t \geqslant t_0$. Then there exist $q_0 \in (0, \pi)$, $p_0$ such that for the solution $(q(t), p(t))$ of (1) with the corresponding initial conditions $q(t_0) = q_0$, $p(t_0) = p_0$, the following holds $q(t) \in (0, \pi)$ for all $t > t_0$.
\end{proposition}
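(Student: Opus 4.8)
The plan is to re-use the topological scheme of Proposition~\ref{propo1}, now carried out in the \emph{open} strip $G=\{0<q<\pi\}$, with the extra hypothesis $\mu|\ddot\xi|<g$ invoked for the single purpose of ruling out the phenomenon that Proposition~\ref{propo1} had to allow: a solution coming to rest at a horizontal position. First I would record the behaviour of the inclusion on the two boundary lines and then feed the resulting strict transversality into a connectedness argument.

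For the boundary analysis, using the limiting fields $f^{\pm}$ computed in the proof of the Corollary, at the point $q=0$, $p=0$ one finds
\[
n\cdot f^{+}(0,0,t)=-\frac{\mu|\ddot\xi|+g}{l},\qquad n\cdot f^{-}(0,0,t)=\frac{\mu|\ddot\xi|-g}{l},
\]
while at $q=\pi$, $p=0$
\[
n\cdot f^{+}(\pi,0,t)=\frac{g-\mu|\ddot\xi|}{l},\qquad n\cdot f^{-}(\pi,0,t)=\frac{g+\mu|\ddot\xi|}{l}.
\]
Under $\mu|\ddot\xi|<g$ both entries of the first pair are strictly negative and both entries of the second are strictly positive. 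Since at $p=0$ the set $\Phi$ is the segment joining $f^{+}$ and $f^{-}$, its $p$-component lies in $[\,n\cdot f^{+},n\cdot f^{-}\,]$; hence $\dot p<0$ at every point $(0,0,t)$ and $\dot p>0$ at every point $(\pi,0,t)$, for every selection of the inclusion. As $\dot q=p$, a solution that reaches $q=0$ (necessarily with $p\le 0$, since it comes from $q>0$) is at once driven into $q<0$, and one reaching $q=\pi$ (with $p\ge 0$) into $q>\pi$. Thus no solution can rest on, or merely graze, a horizontal line: any contact with the boundary forces it to leave the closed strip $[0,\pi]$, and this exit is transversal.

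Next I would run the connectedness argument. As in Proposition~\ref{propo1}, fix a continuous curve $p=\sigma(q)$, $t=t_0$, $0\le q\le\pi$, with $\sigma(0)<0$ and $\sigma(\pi)>0$, and suppose for contradiction that every solution starting on it reaches $\{q=0\}\cup\{q=\pi\}$ in finite time. Assign to each starting point its point of first exit. By the transversality just established the exit persists under small perturbations, so, combining the continuous dependence on initial data with a compactness argument on the curve to bound the exit times uniformly (which lets one apply the continuous-dependence theorem on a single interval $[t_0,T]$), the first-exit map is continuous. Reading off the $q$-coordinate of the exit point then gives a continuous function from the curve into the two-point set $\{0,\pi\}$; it equals $0$ at $q=0$ and $\pi$ at $q=\pi$ because $\sigma(0)<0$ and $\sigma(\pi)>0$ force immediate exit through the respective components. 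A continuous function on a connected interval cannot take only the values $0$ and $\pi$ and take both, so we reach a contradiction. Hence some solution, starting at an interior point $q_0\in(0,\pi)$, $p_0=\sigma(q_0)$, never reaches the boundary, i.e.\ $q(t)\in(0,\pi)$ for all $t>t_0$.

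The main obstacle is exactly the continuity of the first-exit map, which amounts to excluding tangential contact with the horizontal lines; this is what the hypothesis $\mu|\ddot\xi|<g$ secures by making $\dot p$ strictly signed at $(0,0,t)$ and $(\pi,0,t)$. Without it the sticking solutions of Proposition~\ref{propo1} reappear, the exit map may jump between the two components, and one can guarantee only the closed-interval conclusion $q(t)\in[0,\pi]$. A secondary point to be handled with care is the uniform bound on the exit times needed to invoke continuous dependence on a fixed interval; this follows from compactness of the curve together with the transversality from the boundary analysis.
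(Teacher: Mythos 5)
Your proposal is correct and follows essentially the same route as the paper: the same Wa\.zewski-type connectedness argument with the curve $\sigma$ on the strip, with the hypothesis $\mu|\ddot \xi| < g$ used exactly as the paper uses it, namely to make the $p$-component of the inclusion strictly negative at $(q,p)=(0,0)$ and strictly positive at $(\pi,0)$ for every selection, thereby restoring continuity of the first-exit map. The only cosmetic difference is in handling corner contact: the paper excludes it altogether via a backwards-in-time Taylor expansion (a solution arriving at $q=0$, $p=0$ would have had $q<0$ just before, so it cannot come from inside $G$), whereas you allow the contact and show it forces immediate expulsion of the solution and its neighbors from the closed strip --- both variants serve the identical purpose in the same overall scheme.
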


\begin{proof}
The proof is analogous to the previous one. The only difference is that it is possible to show that solutions starting in $G$ cannot leave this set at the points where $q = 0$ or $q = \pi$ and $p = 0$. Indeed, suppose that for the solution $(q(t), p(t))$, for some $t^*$, we have $q(t^*) = 0$ and $p(t^*) = 0$. Since for all $t$, we have  $-g/2l - \mu|\ddot \xi|/2l \leqslant  -g/2l + \mu|\ddot \xi|/2l < 0$, then any solution which reaches the plane $p=0$ at the point $q = 0$, leaves this plane. Moreover, we can conclude that this solution can reach the plane only from the region where $p > 0$. Taking into account the above inequalities, for small $\Delta t < 0$, we have
$$
q(t^* + \Delta t) = -\frac{g}{2l}\Delta t^2 - \frac{\mu}{2l}|\ddot \xi|\Delta t^2 + o(\Delta t^2) < 0.
$$
We obtain that the considered solution reaches the point $q = 0$ and $p = 0$ from the outside of $G$. Similarly, one can prove that solutions cannot leave $G$ at the point $q = \pi$, $p = 0$. 
\end{proof}

\section{Conclusion}
The presented proof is based on the topological ideas of so-called Wa\.zewski method (see \cite{wazewski1948principe}, \cite{reissig1963qualitative}), which can be also applied to other pendulum-like systems. For instance, in \cite{polekhin2014examples}, it was proved that, if the motion of the pendulum is frictionless, then for any $\ddot \xi$, there exists a solution without falling.  Note, that this result agrees with Proposition \ref{propo1} from this paper if we put $\mu = 0$. A good overview of the attempts to prove the existence of a solution that always remains above the horizontal line in the system without friction can be found in \cite{Srzednicki2017}. However, the system with dry friction is qualitatively different comparing to the frictionless case: in the latter case, there are no equilibrium points when $|\ddot\xi|\ne 0$. At the same time, if $\mu \ne 0$ and $|\ddot \xi|$ is relatively small, then there is a set of  equilibrium points in a vicinity of $q = \pi/2$. These points can be considered as solutions without falling, yet from the proof it can be seen that there also exists at least one non-constant solution that never falls.

The ideas that are used in the presented paper can also be used for systems with viscous friction and for more complex systems where the massive point is moving on a two-dimensional surface. Further development of the ideas of Wa\.zewski method can also be used to prove the existence of periodic solutions without falling in pendulum-like and general mechanical systems \cite{polekhin2015forced}, \cite{polekhin2016forced}, \cite{bolotin2015calculus}. Similar methods have been found useful in studying global controllability of an inverted pendulum \cite{Polekhin2017gsi, Polekhin2018}.

In conclusion, we would like to note that the above results hold for a wider class of friction models. In particular, it is possible to consider various sufficiently smooth Stribeck curves. Therefore, we may expect the existence of a falling free motion (however, possibly unstable) in a real system of an inverted pendulum with horizontally moving pivot point.  
% \section*{Acknowledgments} We would like to thank you for \textbf{following
% the instructions above} very closely in advance. It will definitely
% save us lot of time and expedite the process of your paper's
% publication.

% You may incorporate your references as follows in your main tex file.
% Using BibTex is not recommended but can be handled.

\medskip
% The data information below will be filled by AIMS editorial staff
Received xxxx 20xx; revised xxxx 20xx.
\medskip

\end{document}